\documentclass[11pt]{amsart}
\usepackage{amsmath, amssymb, verbatim}
\usepackage{pinlabel}
\usepackage{graphicx}
\newtheorem{thm}{Theorem}[section]
\newtheorem{cor}[thm]{Corollary}
\newtheorem{lem}[thm]{Lemma}

\newtheorem{prop}[thm]{Proposition}

\theoremstyle{definition}

\theoremstyle{remark}

\numberwithin{equation}{section}

%
%   Macros
%

\newcommand{\de}{\delta}

\newcommand{\ga}{\gamma}

\newcommand{\la}{\lambda}

\newcommand{\ka}{\kappa}

\newcommand{\Z}{\mathbb Z}

\newcommand{\del}{\partial}

\DeclareMathOperator{\tb}{tb}

\DeclareMathOperator{\rot}{rot}

\DeclareMathOperator{\Map}{Map}

%   End macros
%

\begin{document}

\title{On overtwisted, right-veering open books} 

\author{Paolo Lisca}
\address{Dipartimento di Matematica ``L. Tonelli'', Largo Bruno Pontecorvo~5, 
Universit\`a di Pisa, 56127 Pisa, ITALY} 
\email{lisca@dm.unipi.it}

\subjclass[2000]{57R17; 53D10} 
%\secondaryclass{53D10} 
\keywords{Contact surgery, destabilizable diffeomorphisms, Giroux's correspondence, open books, 
overtwisted contact structures,  right--veering diffeomorphisms.}
\date{}

\begin{abstract} 
We exhibit infinitely many overtwisted, right--veering, non--destabilizable 
open books, thus providing infinitely many counterexamples to a conjecture of 
Honda--Kazez--Mati\'c. The page of all our open books is a four--holed sphere and 
the underlying 3--manifolds are lens spaces. 
\end{abstract}

\maketitle

\section{Introduction}\label{s:intro}

The purpose of this note is to construct infinitely many counterexamples to a conjecture 
of Honda, Kazez and Mati\'c from~\cite{HKM2}. For the basic notions of contact topology 
not recalled below we refer the reader to~\cite{Et0, Ge}. 

Let $S$ be a compact, oriented surface with boundary and $\Map(S,\del S)$ the group of 
orientation--preserving diffeomorphisms of $S$ which restrict to $\del S$ as the identity, 
up to isotopies fixing $\del S$ pointwise. An {\em open book} (a.k.a.~an {\em abstract open book}) is a 
pair $(S,\Phi)$ where  $S$ is a surface as above and $\Phi\in\Map(S,\del S)$. Giroux~\cite{Gi2} 
introduced a fundamental operation of {\em stabilization} $(S,\Phi)\to (S',\Phi')$ on open books, 
and proved the existence of a 1--1 correspondence 
between the set of open books modulo stabilization and the set of contact 3--manifolds 
modulo isomorphism (see e.g.~\cite{Et} for details). Honda, Kazez and Mati\'c~\cite{HKM} 
showed that a contact 3--manifold is tight if and only if it corresponds to an equivalence class 
of open books $(S,\Phi)$ all of whose monodromies $\Phi$ are {\em right--veering} 
(in the sense of~\cite[Section~2]{HKM}). In~\cite{Go, HKM} it is also showed that every 
open book can be made right--veering after a sequence of stabilizations. In~\cite{HKM2}, 
Honda, Kazez and Mati\'c proved that, when $S$ is a holed torus, the contact structure corresponding 
to $(S,\Phi)$ is tight if and only if $\Phi$ is right--veering, and conjectured that a 
non--destabilizable right--veering open book corresponds to a tight contact 3--manifold. 
The Honda--Kazez--Mati\'c conjecture was recently disproved by Lekili~\cite{Le}, who 
produced a counterexample $(S,\Phi)$ with $S$ equal to a four--holed sphere 
and whose underlying 3--manifold is the Poincar\'e homology sphere. 

We shall now describe our examples. Denote by $\de_\ga\in\Map(S,\del S)$ the class of a positive 
Dehn twist along a simple closed curve $\ga\subset S$. 

\begin{thm}\label{t:main}
Let $S$ be an oriented four--holed sphere, and $a, b, c, d, e$ 
the simple closed curves on $S$ shown in Figure~\ref{f:S}. 
\begin{figure}[ht]
\labellist
\small\hair 2pt
\pinlabel $a$ at -10 195
\pinlabel $b$ at 260 198
\pinlabel $c$ at -10 33
\pinlabel $d$ at 260 31
\pinlabel $e$ at 123 96
\endlabellist
\centering
\includegraphics[scale=0.5]{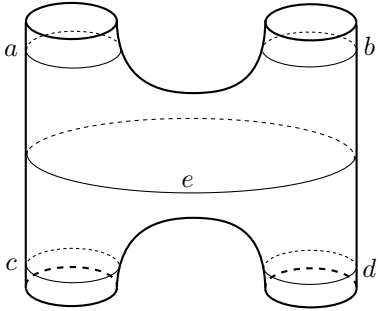}
\caption{The four--holed sphere $S$}
\label{f:S}
\end{figure}
Let $h, k\geq 1$ be integers. Define 
\[
\Phi_{h,k}:=\de_a^h\de_b\de_c\de_d\de_e^{-k-1}\in\Map(S,\del S).
\]
Then, 
\begin{itemize}
\item
The underlying 3--manifold $Y_{(S,\Phi_{h,k})}$ is the lens space 
\[
L((h+1)(2k-1)+2,(h+1)k+1);
\]
\item
the associated contact structure $\xi_{(S,\Phi_{h,k})}$ is overtwisted; 
\item
$\Phi_{h,k}$ is right--veering;
\item 
$(S,\Phi_{h,k})$ is not destabilizable. 
\end{itemize} 
\end{thm}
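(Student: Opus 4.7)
I would establish the four bullets in turn, treating non-destabilizability as the main obstacle.

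\emph{Surgery description and overtwistedness.} Converting $(S,\Phi_{h,k})$ to a contact $(\pm 1)$-surgery diagram via the standard dictionary (each positive Dehn twist $\delta_\ga$ becomes contact $(-1)$-surgery on a Legendrian realisation of $\ga$ on a page, each negative Dehn twist becomes contact $(+1)$-surgery) produces a Legendrian link in $(S^3,\xi_{st})$ with $h$ copies of $a$, one each of $b,c,d$ carrying $(-1)$-surgery, and $k+1$ copies of $e$ carrying $(+1)$-surgery. Converting contact framings to topological framings and simplifying by Kirby calculus should collapse the diagram to a single unknot with rational coefficient $((h+1)(2k-1)+2)/((h+1)k+1)$, identifying the 3-manifold as the stated lens space. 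Reading $d_3(\xi_{(S,\Phi_{h,k})})$ off the same diagram by the Ding--Geiges--Stipsicz formula and comparing with the $d_3$-invariants of the tight contact structures on this lens space enumerated by Honda's classification will reveal no match, forcing $\xi_{(S,\Phi_{h,k})}$ to be overtwisted.

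\emph{Right-veering.} Since $e$ lies in the interior of $S$, the twist $\de_e^{-k-1}$ is supported away from $\del S$ and acts trivially on the germ of any arc at a boundary point. For any properly embedded arc $\alpha\subset S$, the behaviour of $\Phi_{h,k}(\alpha)$ near each endpoint is therefore governed entirely by the positive boundary-parallel twists $\de_a^h,\de_b,\de_c,\de_d$, each of which rotates a collar of its boundary component to the right. Hence $\Phi_{h,k}(\alpha)$ lies weakly to the right of $\alpha$ at every endpoint, so $\Phi_{h,k}$ is right-veering.

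\emph{Non-destabilizability.} Assume for contradiction that $(S,\Phi_{h,k})$ destabilizes to $(S_0,\Phi_0)$. Then $\chi(S_0)=\chi(S)+1=-1$, so $S_0$ is either a pair of pants or a once-holed torus; $(S_0,\Phi_0)$ supports $\xi_{(S,\Phi_{h,k})}$, and $\Phi_0$ inherits right-veeringness from $\Phi_{h,k}$ by the invariance of right-veering under destabilization \cite{HKM}. If $S_0$ is a once-holed torus, the main theorem of \cite{HKM2} then forces $\xi_{(S,\Phi_{h,k})}$ to be tight, contradicting the overtwistedness established above. If $S_0$ is a pair of pants, then $\Map(S_0,\del S_0)\cong\Z^3$ is freely generated by the three boundary Dehn twists, so $\Phi_0=\de_{c_1}^{n_1}\de_{c_2}^{n_2}\de_{c_3}^{n_3}$ with all $n_i\geq 0$ (the non-negativity forced by right-veering). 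The remaining task is to rule out every such triple: using the explicit surgery description of pants open books together with a $d_3$/$\Spin^c$ comparison against $\xi_{(S,\Phi_{h,k})}$, no choice of $(n_1,n_2,n_3)$ should simultaneously reproduce the lens space $L((h+1)(2k-1)+2,(h+1)k+1)$ and the relevant homotopy class of overtwisted 2-plane field. Carrying out this enumeration uniformly in $h$ and $k$ is the essential technical difficulty of the theorem; the other three bullets reduce to standard contact-topological bookkeeping.
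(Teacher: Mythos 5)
There are genuine gaps, two of which are fatal. First, your right-veering argument is wrong: right-veering is not a condition on germs of arcs at the boundary before isotopy. One must isotope $\Phi(\alpha)$ and $\alpha$ to intersect efficiently and only then compare them near the endpoints, so a negative twist about the \emph{interior, non-boundary-parallel} curve $e$ can drag an arc crossing $e$ strictly to the left at its endpoints. Your argument, taken literally, would also show that $\de_e^{-k-1}$ alone (or $\de_b\de_c\de_d\de_e^{-N}$ for any $N$) is right-veering, which is false. Whether the boundary twists $\de_a^h,\de_b,\de_c,\de_d$ compensate for $\de_e^{-k-1}$ is exactly the nontrivial content here; the paper settles it by quoting Ar{\i}kan--Durusoy, or alternatively by repeated application of \cite[Corollary~3.4]{HKM} to explicit arcs joining pairs of boundary components. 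Second, your non-destabilizability argument rests on ``the invariance of right-veering under destabilization,'' which is false and in fact contradicts the Goodman/Honda--Kazez--Mati\'c theorem that \emph{every} open book becomes right-veering after stabilizations: stabilization can create right-veering-ness, so it cannot descend in general. The paper runs the argument the other way: overtwistedness plus Ar{\i}kan's criterion (tight iff all exponents $a_i\ge 0$ on the pair of pants) forces some $a_i<0$, and then Lekili's specific analysis shows that \emph{no} stabilization of such a pants open book to a four-holed sphere is right-veering, contradicting the third bullet. (Your once-holed torus case is vacuous anyway, since stabilization cannot decrease genus and $S$ is planar. Also, had your ``inheritance'' claim been true, your pair-of-pants case would finish immediately --- all $n_i\ge 0$ gives a Stein fillable, hence tight, structure, contradicting overtwistedness --- so the $d_3$/$\Spin^c$ enumeration you flag as ``the essential technical difficulty'' would be superfluous.)

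A smaller but still real issue is overtwistedness: your plan to compare $d_3$ against Honda's list of tight structures is not carried out and is not guaranteed to succeed --- if the $d_3$ and $\Spin^c$ data happened to match a tight structure the method would be inconclusive. The paper avoids this by combining the Giroux--Honda fact that every contact structure on a lens space is overtwisted or Stein fillable with Lekili's Heegaard Floer computation showing the contact Ozsv\'ath--Szab\'o invariant of $\xi_{(S,\Phi_{h,k})}$ vanishes, which rules out Stein fillability outright. Your surgery presentation and Kirby-calculus identification of the lens space do match the paper's Proposition~\ref{p:csurgery} and Corollary~\ref{c:under}.
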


Warning: in the above statement we adopt the convention that the lens space $L(p,q)$ is the oriented 3--manifold 
obtained by performing a rational surgery along an unknot in $S^3$ with coefficient $-p/q$. 

We prove Theorem~\ref{t:main} in Section~\ref{s:proof}. The proof can be outlined as follows. 
In Proposition~\ref{p:csurgery} we use elementary arguments to determine a contact surgery presentation 
for the contact 3--manifold $(Y_{(S,\Phi_{h,k})},\xi_{(S,\Phi_{h,k})})$, and in Corollary~\ref{c:under} we apply  
Proposition~\ref{p:csurgery} and a few Kirby calculus moves to identify the underlying 3--manifold 
$Y_{(S,\Phi_{h,k})}$. In Proposition~\ref{p:ot} 
we appeal to calculations from~\cite{Le} to deduce that the contact Ozsv\'ath--Szab\'o invariant of 
$\xi_{(S,\Phi_{h,k})}$ vanishes, and we conclude from the fact that $Y_{(S,\Phi_{h,k})}$ is a 
lens space that $\xi_{(S,\Phi_{h,k})}$ must be overtwisted. We show  
that $\Phi_{h,k}$ is right--veering in Lemma~\ref{l:right-veer} by observing that this result follows directly 
from~\cite[Theorem~4.3]{AD}, but can also be deduced imitating the proof of~\cite[Theorem~1.2]{Le}, 
i.e.~applying~\cite[Corollary~3.4]{HKM}. Finally, 
we use results from~\cite{Ar, Le} to conclude that $(S,\Phi_{h,k})$ is not destabilizable. 

\smallskip
{\bf Acknowledgements:} I wish to thank Yanki Lekili for pointing out to me his paper~\cite{Le}. 
The present work is part of the author's activities within CAST, 
a Research Network Program of the European Science Foundation. 

\section{Proof of Theorem~\ref{t:main}}\label{s:proof}

Recall that every contact structure has a {\em contact surgery presentation}~\cite{DG}. We refer 
the reader to~\cite{DG} for the basic properties of contact surgeries, and to~\cite{LS} for the use 
of the `front notation'  in contact surgery presentations, in particular for the meaning of Figure~2 below. 

\begin{prop}\label{p:csurgery}
For $h, k\geq 1$, the contact structure $\xi_{(S,\Phi_{h,k})}$ has the contact surgery presentation 
given by Figure~\ref{f:csurg}.
\end{prop}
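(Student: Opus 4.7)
The plan is to apply the standard algorithm converting a Dehn twist factorization of the monodromy into a contact surgery diagram. Recall from~\cite{DG} (see the exposition in~\cite{Et}) that if $\Phi\in\Map(S,\del S)$ factors as $\de_{\ga_1}^{\ep_1}\cdots\de_{\ga_n}^{\ep_n}$ with each $\ep_i\in\{+1,-1\}$, then $(Y_{(S,\Phi)},\xi_{(S,\Phi)})$ is obtained from the contact manifold $(Y_{(S,\Id)},\xi_{(S,\Id)})$ supported by the trivial open book on $S$ by performing $(-\ep_i)$-contact surgery on a Legendrian realization of each $\ga_i$ sitting on a page, the successive curves being pushed onto slightly higher pages to avoid mutual intersections. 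Applied to the factorization $\Phi_{h,k}=\de_a^h\de_b\de_c\de_d\de_e^{-k-1}$ this produces a diagram consisting of $h$ parallel Legendrian copies of $a$ with $(-1)$-contact surgery, a single $(-1)$-contact surgery on each of $b$, $c$, $d$, and $k+1$ parallel Legendrian copies of $e$ with $(+1)$-contact surgery.

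The next step is to identify $(Y_{(S,\Id)},\xi_{(S,\Id)})$ and to realize the curves $a,b,c,d,e$ explicitly in the front projection. Since $S$ is a four-holed sphere, $(S,\Id)$ supports the unique Stein-fillable contact structure on $\#_3(S^1\x S^2)$. A standard contact surgery presentation for this contact manifold uses three pairwise unlinked $\tb=-1$ Legendrian unknots in $S^3$, each carrying $(+1)$-contact surgery. I would then draw Legendrian representatives of $a,b,c,d,e$ on a convex page of this open book: the boundary-parallel curves $a,b,c,d$ realize as small Legendrian unknots, each linking exactly one of the three base unknots (with one of them linking none, playing the role of the outer boundary component), while the separating curve $e$ realizes as a Legendrian unknot encircling the appropriate pair of boundary components according to the configuration of Figure~\ref{f:S}. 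Attaching the surgery coefficients dictated by $\Phi_{h,k}$ and simplifying by Legendrian isotopy should yield Figure~\ref{f:csurg}.

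The main obstacle is the bookkeeping: one has to choose the Legendrian representatives so that their Thurston--Bennequin framings match the contact surgery coefficients indicated in Figure~\ref{f:csurg}, possibly by adding zig-zag stabilizations to normalize framings, and one must make sure that the $h$ pushoffs of $a$ and the $k+1$ pushoffs of $e$ appear in the diagram with the correct mutual linking. Once the positions of $a,b,c,d,e$ on the page are fixed, this is a routine exercise in front-projection combinatorics, but it is the only step that is not immediate from the general algorithm.
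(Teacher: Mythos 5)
Your overall strategy -- run the generic Dehn-twist-to-contact-surgery algorithm starting from the trivial open book $(S,\Id)$ on $\#_3(S^1\times S^2)$ and then cancel -- is a legitimate route and genuinely different from the paper's. The paper instead builds $(S,\de_b\de_c\de_d)$ directly as a double Giroux stabilization of the annulus open book of $(S^3,\xi_{\rm st})$, so that the three positive twists $\de_b,\de_c,\de_d$ are absorbed into the monodromy of an open book for $S^3$ and never appear as surgery curves; the payoff is that the remaining curves $e$ and $a$ are identified on the nose as the stabilized Legendrian unknots $\ka_-$ and $(\ka_-)_-$, with $\tb=-2,-3$ and known rotation numbers, which is literally Figure~2. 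Your route would instead produce a diagram with $3+3+h+(k+1)$ components and require three Ding--Geiges cancellations (each $-1$-surgery on $b,c,d$ against the $+1$-surgery on the base unknot it is a push-off of) before one can even start identifying $a$ and $e$ in $S^3$.

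The problem is that everything after the setup is deferred as ``routine bookkeeping,'' and that deferred step \emph{is} the proposition: one must show that the Legendrian realizations of $a$ and $e$, after the cancellations, are the specific front projections of Figure~2, including their rotation numbers, since these determine the contact structure. Two concrete points in your sketch would derail this computation if taken literally. First, the claim that one of the four boundary-parallel curves ``links none'' of the three base unknots is false: the curve parallel to the outer boundary component is isotopic in the page to a curve encircling all three holes, hence links all three base unknots once; a boundary-parallel curve linking nothing would make the corresponding $(-1)$-surgeries split off connected summands, contradicting the fact that $Y_{(S,\Phi_{h,k})}$ is a lens space. (This linking is exactly why $a$ ends up as a $\tb=-3$ stabilized unknot and $e$, which encircles two holes, as a $\tb=-2$ one.) Second, you cannot ``add zig-zag stabilizations to normalize framings'': the Legendrian realization of a curve on a convex page is determined up to Legendrian isotopy, its contact framing automatically agrees with the page framing, and its zig-zags must be \emph{computed}, not chosen -- choosing them would change the contact manifold. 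So as written the proposal is a plan with an incorrect configuration at its core rather than a proof.
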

\begin{figure}[ht]
\labellist
\small\hair 2pt
\pinlabel $\frac{1}{k+1}$ at 216 181
\pinlabel $-\frac1h$ at 216 83
\endlabellist
\centering
\includegraphics[scale=0.5]{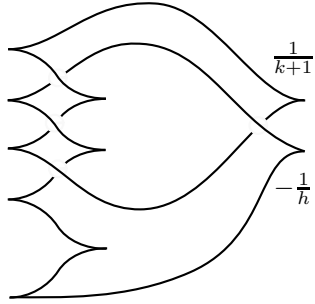}
\caption{Contact surgery presentation for $\xi_{(S,\Phi_{h,k})}$, $h,k\geq 1$.}
\label{f:csurg}
\end{figure}

\begin{proof}
Figure~\ref{f:obs}(a) represents an open book $(A,f)$, where $A$ is an annulus and $f$ is a positive 
Dehn twist along the core of $A$. The associated contact 3--manifold is the standard contact 3--sphere
$(S^3,\xi_{\rm st})$, the annulus $A$ can be viewed as the page of an open book decomposition of $S^3$, 
and the curve $\ka$ in the picture can be made Legendrian via an isotopy of the 
contact structure, in such a way that the contact framing on $\ka$ coincides with the framing induced on it by 
the page (see~e.g.~\cite[Figure~11]{Et}). The knot $\ka$ is the unique Legendrian unknot in $(S^3,\xi_{\rm st})$ having 
Thurston--Bennequin invariant $\tb(\ka)=-1$ and rotation number $\rot(\ka)=0$. A suitable choice of orientation for $\ka$ uniquely specifies 
its {\em negative} oriented Legendrian stabilization $\ka_-$, which satisfies $\tb(\ka_-)=-2$ and $\rot(\ka_-)=-1$. 
As shown in~\cite{Et}, $\ka_-$ can be realized as sitting on the page of a Giroux stabilization $(A',f')$ of $(A,f)$. This is illustrated 
in Figure~\ref{f:obs}(b), assuming the orientation on $\ka$ was taken to be ``counterclockwise'' in 
Figure~\ref{f:obs}(a).  Finally, Figure~\ref{f:obs}(c) shows an open book $(S,f'')$ obtained by Giroux 
stabilizing $(A',f')$ and containing both $\ka_-$ and $(\ka_-)_-$ in $S$ ($\ka_-$ was  
also given the ``counterclockwise'' orientation in Figure~\ref{f:obs}(b)). Clearly $(S, f'')$ still corresponds to $(S^3,\xi_{\rm st})$,   
and it is well--known that $\ka_-$, $(\ka_-)_-$ are the two Legendrian knots illustrated in Figure~\ref{f:csurg} 
(when oriented ``clockwise'' in that picture). By definition, $\Phi_{h,k}$ is obtained by pre--composing $f''$ 
with $k+1$ negative Dehn twists along parallel copies of $\ka_-$ and $h$ positive Dehn twists along parallel 
copies of $(\ka_-)_-$. Moreover, if $m\neq 0$ is an integer, $\frac 1m$--contact surgery along any Legendrian 
knot $\la$ is equivalent to $\frac{m}{|m|}$--contact surgeries along $|m|$ Legendrian push--offs of $\la$~\cite{DG}. 
Since page and contact framings coincide and by e.g.~\cite[Theorem~5.7]{Et} positive (negative, respectively) 
Dehn twists correspond to $-1$--contact surgeries ($+1$--contact surgeries, respectively), it is easy to check 
that the resulting contact structure is given by the contact surgery presentation of Figure~\ref{f:csurg}. 
\begin{figure}[ht]
\labellist
\small\hair 2pt
\pinlabel (a) at 89 5
\pinlabel (b) at 302 5
\pinlabel (c) at 562 5
\pinlabel {\scriptsize $+$} at 131 118
\pinlabel $\ka$ at 157 119
\pinlabel {\scriptsize $+$} at 328 161
\pinlabel {\scriptsize $+$} at 328 202
\pinlabel $\ka_-$ at 366 177
\pinlabel {\scriptsize $+$} at 589 148
\pinlabel {\scriptsize $+$} at 589 190
\pinlabel {\scriptsize $+$} at 589 287
\pinlabel $\ka_-$ at 602 256
\pinlabel $(\ka_-)_-$ at 670 200
\endlabellist
\centering
\includegraphics[scale=0.45]{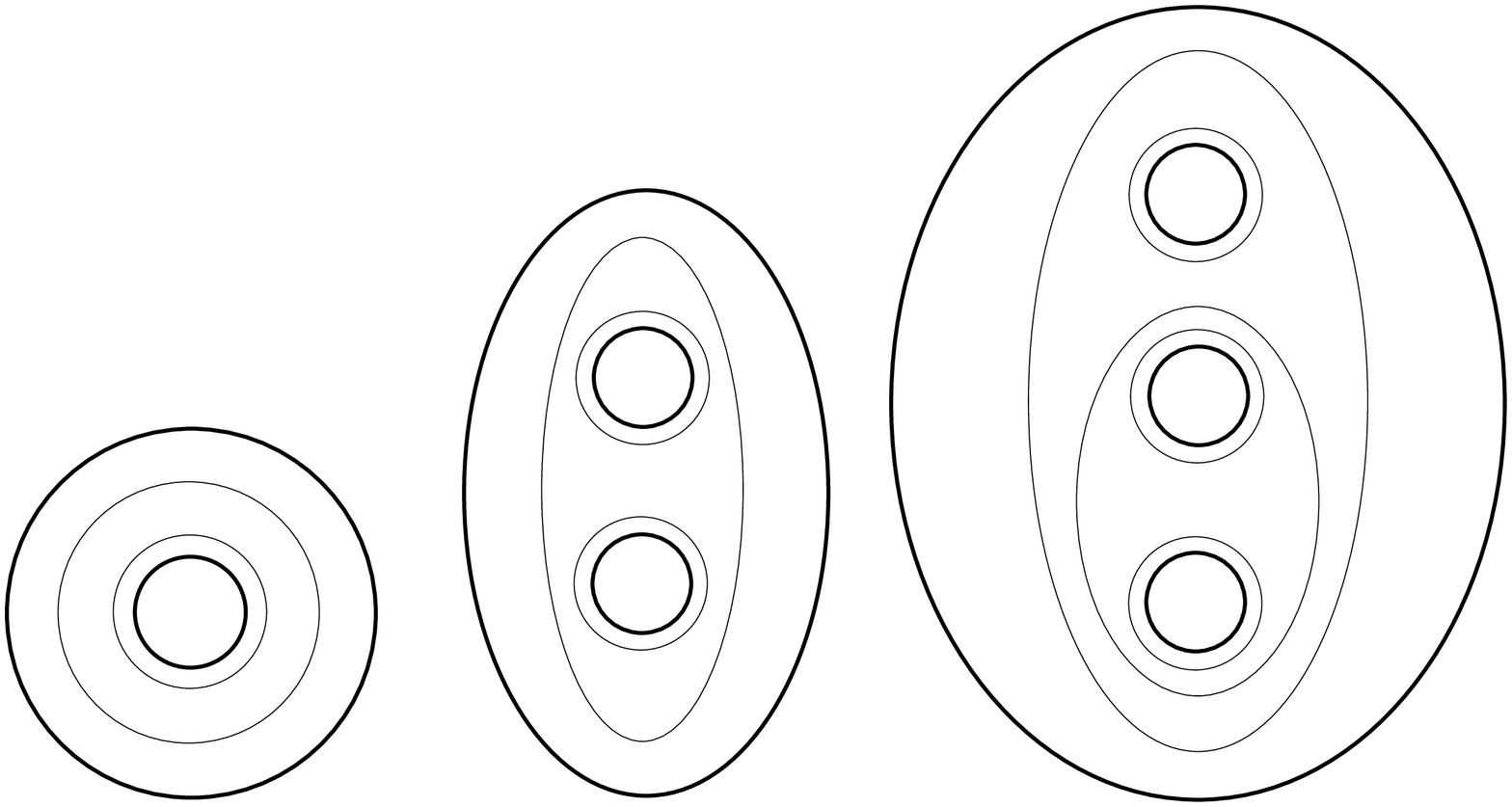}
\caption{Determination of the contact surgery presentation.}
\label{f:obs}
\end{figure}
\end{proof} 

\begin{cor}\label{c:under}
For $h, k\geq 1$, the oriented 3--manifold underlying the open book $(S,\Phi_{h,k})$ is the 
lens space $L((h+1)(2k-1)+2,(h+1)k+1)$.
\end{cor}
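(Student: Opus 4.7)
The plan is to manipulate the contact surgery diagram of Figure~\ref{f:csurg} into a standard smooth surgery description of a lens space via a short sequence of Kirby calculus moves.

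First, convert contact coefficients to smooth surgery coefficients. From the proof of Proposition~\ref{p:csurgery}, the two Legendrian unknots in Figure~\ref{f:csurg} are $\ka_-$ with $\tb(\ka_-)=-2$ and $(\ka_-)_-$ with $\tb((\ka_-)_-)=-3$. The relation $r_{\mathrm{smooth}}=\tb+r_{\mathrm{contact}}$ therefore yields smooth surgery coefficients $-(2k+1)/(k+1)$ and $-(3h+1)/h$, respectively. Because $(\ka_-)_-$ is a Legendrian stabilization of a Legendrian pushoff of $\ka_-$ and Legendrian stabilization does not alter topological linking, the two underlying smooth unknots have linking number $\tb(\ka_-)=-2$.

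Next, expand each rational coefficient into a chain of integer surgeries via its negative continued fraction:
\[
-\frac{2k+1}{k+1}=[-2,\,-(k+1)],\qquad -\frac{3h+1}{h}=[-4,\,\underbrace{-2,\ldots,-2}_{h-1}].
\]
This produces a diagram of $h+2$ framed unknots whose only non-standard feature is that the two ``innermost'' components, one from each of the two chains, link each other twice rather than once (inherited from the original linking $-2$).

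The main step is a single handle slide: slide the $(-4)$-framed component over the adjacent $(-2)$-framed component it links twice. The standard handle-slide formulae convert the framing $-4$ into $-2$, replace the anomalous linking $-2$ by $0$, and introduce a new linking $\pm 1$ between the slid component and a previously disjoint component of the other chain. After an appropriate orientation reversal, one obtains an honest linear chain of $h+2$ unknots, pairwise linked $+1$ along the chain, with framings $-2,\,-(k+1),\,-2,\,-2,\ldots,-2$.

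This is precisely the standard surgery description of the lens space $L(P,Q)$ where $P/Q$ equals the negative continued fraction $[2,\,k+1,\,2,\,2,\ldots,2]$ with $h$ trailing $2$'s. Evaluating this continued fraction from right to left collapses cleanly to $P/Q=((h+1)(2k-1)+2)/((h+1)k+1)$, giving the claimed lens space. The main delicacy in the argument is correctly tracking the signs of linking numbers and the orientation changes through the handle slide; once these are pinned down, the continued fraction calculation is routine.
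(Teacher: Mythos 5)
Your proposal is correct and follows essentially the same route as the paper: convert the contact surgery coefficients to smooth ones via $\tb$, reduce by Kirby calculus to the linear chain of unknots with framings $-2,\,-(k+1),\,-2,\dots,-2$ (with $h$ trailing $-2$'s), and evaluate the continued fraction to get $\frac{(h+1)(2k-1)+2}{(h+1)k+1}$. The paper sequences the moves differently (two $+1$--blowups to unlink the two components first, then inverse slam--dunks, a handle slide and blowdowns), but your framing and linking--number bookkeeping for the single handle slide checks out, and both arguments terminate at the identical chain and the identical lens space.
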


\begin{proof} 
Using the fact that the two Legendrian unknots illustrated in Figure~\ref{f:csurg} have Thurston--Bennequin 
invariants $-2$ and $-3$, it is easy to check that the topological surgery underlying Figure~\ref{f:csurg} is 
given by the first (upper left) picture of Figure~\ref{f:kirby}. 
\begin{figure}[ht]
\labellist
\small\hair 2pt
\pinlabel $-2$ at 49 95
\pinlabel {\scriptsize $-2+\frac1{k+1}$} at 20 68
\pinlabel {\scriptsize $-3-\frac1{h}$} at 78 68
\pinlabel {\scriptsize $-k-1$} at  172 68
\pinlabel {\scriptsize $0$} at 218 68 
\pinlabel {\scriptsize $1$} at 253 116 
\pinlabel {\scriptsize $1$} at 264 72
\pinlabel {\scriptsize $-1$} at 289 68 
\pinlabel {\scriptsize $h$} at 319 86
\pinlabel {\scriptsize $-2$} at 319 38
\pinlabel {\scriptsize $-k-1$} at 276 37
\pinlabel {\scriptsize $-1$} at 227 38
\pinlabel {\scriptsize $h$} at 205 29
\pinlabel {\scriptsize $-2$} at 143 35
\pinlabel {\scriptsize $-k-1$} at 115 35
\pinlabel {\scriptsize $-2$} at 85 35
\pinlabel {\scriptsize $-2$} at 55 35
\pinlabel {\scriptsize $-2$} at 32 35
\pinlabel {\scriptsize $h$} at 62 50
\pinlabel $\overbrace{\phantom{xxxxxxxxxx}}$ at 62 40
\endlabellist
\centering
\includegraphics[scale=1]{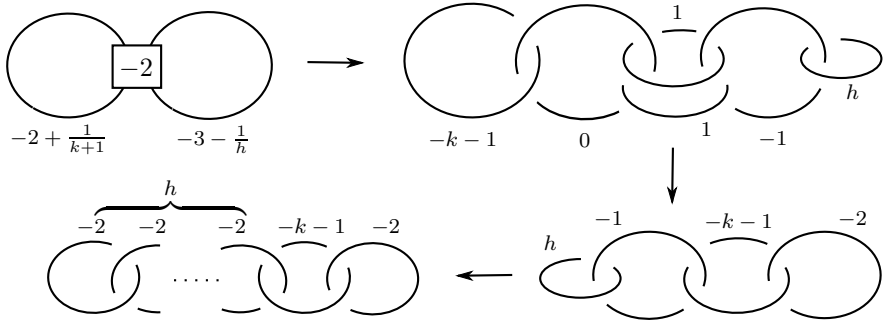}
\caption{Determination of the underlying 3--manifold.}
\label{f:kirby}
\end{figure}
Two $+1$--blowups and two inverse slam--dunks give the second picture, while the third picture 
is obtained from the second one by sliding the $-1$--framed knot over the $0$--framed knot and 
then applying two $+1$--blow--downs. The last picture is obtained simply converting 
the $h$--framed unknot in the third picture into the string of $-2$--framed unknots via a sequence 
of $-1$--blowups and a final $+1$--blowdown. The last picture shows that the underlying 
3--manifold $Y_{(S,\Phi_{h,k})}$ is obtained by performing a rational surgery on an unknot in $S^3$ with coefficient 
$-p/q$, where 
\[
\frac{p}q = 
2 - \cfrac{1}{k+1-\cfrac{1}{2-\cfrac{1}{\ddots -\cfrac{1}{2}}}}
=\frac{(h+1)(2k-1)+2}{(h+1)k+1}.
\]
Therefore, according to our conventions $Y_{(S,\Phi_{h,k})}$ can be identified with the lens space $L((h+1)(2k-1)+2,(h+1)k+1)$.
\end{proof} 

\begin{prop}\label{p:ot}
For $h, k\geq 1$, the contact structure $\xi_{(S,\Phi_{h,k})}$ is overtwisted. 
\end{prop}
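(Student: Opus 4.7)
The plan is to show that the contact Ozsváth--Szabó invariant $c(\xi_{(S,\Phi_{h,k})})$ vanishes, and then combine this with Corollary~\ref{c:under} (which identifies the underlying 3--manifold as a lens space) to force overtwistedness. The logical structure relies on the chain of implications: on a lens space, every tight contact structure is Stein fillable (by Honda's classification), and Stein fillable contact structures have non--vanishing contact invariant (Ozsváth--Szabó). Equivalently, a contact structure on a lens space with vanishing contact invariant must be overtwisted. So once vanishing is established on our lens space $L((h+1)(2k-1)+2,(h+1)k+1)$, overtwistedness is immediate.

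For the vanishing, I would appeal to Lekili's calculations in~\cite{Le}. Lekili analyzed an open book on the four--holed sphere whose monodromy is essentially of the same shape as $\Phi_{h,k}$, namely a product of boundary--parallel positive Dehn twists together with a negative power of the twist along the central curve $e$, and showed via a direct Heegaard Floer computation (using the Honda--Kazez--Matić presentation of the contact invariant from an open book) that the associated contact invariant is zero. My plan is to verify that the same computation, or a straightforward adaptation of it, applies to the family $\Phi_{h,k} = \de_a^h\de_b\de_c\de_d\de_e^{-k-1}$: the extra positive twists $\de_a^{h-1}$ and the extra negative twists $\de_e^{-k}$ simply add more generators and differentials of the same type as in Lekili's diagram, without affecting the specific cancellation responsible for the vanishing of the cycle representing $c(\xi_{(S,\Phi_{h,k})})$.

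The main obstacle is precisely this extension step: one has to be careful that iterating the negative Dehn twist $\de_e^{-1}$ does not produce new cycles that would make the contact class survive, and symmetrically that the positive twists $\de_a^h$ do not kill the boundary operator responsible for $c = 0$. In practice I expect this to be controlled by the same naturality/stabilization properties of the Heegaard Floer contact class used in~\cite{Le}, together with the fact that stabilizing the monodromy by a positive Dehn twist along a non--separating curve does not change the invariant. With this, the vanishing propagates from Lekili's single case to the whole family $(h,k)$, completing the proof.
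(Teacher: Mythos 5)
Your overall strategy coincides with the paper's: both arguments reduce overtwistedness to the vanishing of the contact Ozsv\'ath--Szab\'o invariant, using the dichotomy that a contact structure on a lens space is either overtwisted or Stein fillable (Giroux, Honda) together with the non--vanishing of the invariant for Stein fillable structures (Ozsv\'ath--Szab\'o). That part of your plan is exactly what the paper does and is fine, given Corollary~\ref{c:under}.

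The gap is in how you propose to get the vanishing. The paper simply quotes \cite[Theorem~1.3]{Le}, which is already stated for a family of monodromies on the four--holed sphere broad enough to contain every $\Phi_{h,k}=\de_a^h\de_b\de_c\de_d\de_e^{-k-1}$, so no adaptation of the Heegaard Floer computation is needed. Your plan to re-run and extend the computation is therefore unnecessary, and, more importantly, the shortcut you offer for the ``extension step'' does not work as stated. Composing the monodromy with a positive Dehn twist along a curve already lying on the page (here the boundary--parallel curve $a$) is not a Giroux stabilization --- stabilization enlarges the page --- but a Legendrian ($-1$--contact) surgery; the induced cobordism map sends the contact class of the \emph{new} structure to that of the old one. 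Consequently vanishing of $c$ propagates under contact $+1$--surgeries, i.e.\ when you add the extra \emph{negative} twists $\de_e^{-k}$, but it does not automatically persist when you add the extra \emph{positive} twists $\de_a^{h-1}$: a priori the contact class could become nonzero after Legendrian surgery on a structure with vanishing invariant. (Also, every simple closed curve on a four--holed sphere is separating, so the ``non--separating curve'' clause has no purchase here.) To make your version rigorous you would either have to carry out Lekili's chain--level computation uniformly in $h$ --- which is precisely what his Theorem~1.3 records --- or supply a separate argument for the positive--twist direction.
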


\begin{proof}
By~\cite{Gi, Ho} a contact structure on a lens space is either overtwisted or Stein fillable. Moreover, 
Stein fillable contact structures have non--zero contact Ozsv\'ath--Szab\'o invariant~\cite{OSz}. 
Finally, \cite[Theorem~1.3]{Le} immediately implies that the contact invariant of 
$(S,\Phi_{h,k})$ vanishes, therefore $\xi_{(S,\Phi_{h,k})}$ must be overtwisted. 
\end{proof}

\begin{lem}\label{l:right-veer}
For $h, k\geq 1$, the diffeomorphism class 
\[
\Phi_{h,k}=\de_a^h\de_b\de_c\de_d\de_e^{-k-1}\in\Map(S,\del S)
\] 
is right--veering.  
\end{lem}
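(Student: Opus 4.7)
\emph{Proof plan.} My plan is to follow Lekili's strategy in the proof of \cite[Theorem~1.2]{Le}, applying the criterion \cite[Corollary~3.4]{HKM}; an alternative route, mentioned in the introduction, is \cite[Theorem~4.3]{AD}. The starting point is the decomposition $\Phi_{h,k} = P \circ \de_e^{-k-1}$, where $P := \de_a^h\de_b\de_c\de_d$ is a product of positive Dehn twists, so that all non-positivity of $\Phi_{h,k}$ is concentrated in the single factor $\de_e^{-k-1}$.

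Right--veeringness is checked one properly embedded essential arc at a time: for each such arc $\al\subset S$ starting on $\del S$, one asks whether $\Phi_{h,k}(\al)$ either is isotopic to $\al$ or leaves the initial endpoint strictly to the right. I would use two elementary facts from \cite{HKM}: every positive Dehn twist is right--veering, and right--veering classes form a monoid under composition. In particular $P$ itself is already right--veering, so the only possible source of failure of the right--veering property of $\Phi_{h,k}$ is the factor $\de_e^{-k-1}$.

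The verification then splits naturally into two cases. If $\al$ is disjoint from $e$ up to isotopy, then $\de_e^{-k-1}(\al)=\al$ and so $\Phi_{h,k}(\al)=P(\al)$ is immediately to the right of $\al$. The substantive case is that in which $\al$ meets $e$ essentially: here $\de_e^{-k-1}(\al)$ may well leave the endpoint of $\al$ strictly to the left, and one must show that applying $P$ afterwards restores the right--veering condition at each endpoint of $\al$.

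This last case is the main obstacle. Following Lekili and \cite[Corollary~3.4]{HKM}, the plan is to perform a local analysis at each of the four boundary components of $S$: using the explicit configuration of $a,b,c,d,e$ in Figure~\ref{f:S} one compares the leftward twist effect of $\de_e^{-k-1}$ near each boundary component with the rightward twist effect of whichever curves among $a,b,c,d$ lie nearby. The desired conclusion is that in every case the rightward contribution strictly dominates, uniformly in $h,k\geq 1$. Carrying out this finite diagrammatic check---which \cite[Corollary~3.4]{HKM} packages cleanly---completes the proof.
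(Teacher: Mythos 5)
Your framing---write $\Phi_{h,k}=P\circ\de_e^{-k-1}$ with $P=\de_a^h\de_b\de_c\de_d$, use that positive Dehn twists are right--veering and that right--veering classes form a monoid, and invoke \cite[Corollary~3.4]{HKM}---points in the same direction as the paper, but the step carrying all the content is missing. Your Case~1 (arcs disjoint from $e$) is correct but easy. In Case~2 you assert that near each boundary component ``the rightward contribution strictly dominates, uniformly in $h,k\geq 1$'' and call the verification a ``finite diagrammatic check.'' Neither claim stands as written. First, the isotopy classes of properly embedded arcs meeting $e$ form an infinite family (arcs can wind around $e$ and the boundary components arbitrarily), so there is no finite arc-by-arc check to carry out. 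Second, the proposed mechanism cannot be a counting argument: the exponent $k+1$ of the negative twist is unbounded while $\de_b,\de_c,\de_d$ each occur exactly once, so any ``domination'' of rightward over leftward twisting would have to be qualitative, and you give no reason why it holds. That is exactly the non-obvious content of the lemma, so as it stands this is a genuine gap, not a deferred routine verification. You have also inverted the role of \cite[Corollary~3.4]{HKM}: it is not a device for packaging a check over all arcs, but a criterion that certifies right--veering with respect to a boundary component from the behaviour of a single well-chosen arc, which is precisely what makes the infinite check avoidable.

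For comparison, the paper's argument never analyzes arcs crossing $e$. It applies \cite[Corollary~3.4]{HKM} to one arc $\ga_{cd}$ joining the boundary components $\del_c$ and $\del_d$ parallel to $c$ and $d$ and disjoint from $e$ (hence fixed by $\de_e^{-k-1}$) to conclude that $\de_d\de_e^{-k-1}$ is right--veering with respect to $\del_d$; a second application to $\de_d\de_e^{-k-1}$ and the same arc gives that $\de_c\de_d\de_e^{-k-1}$ is right--veering with respect to $\del_c$ (and with respect to $\del_d$ by the monoid property, since $\de_c$ is right--veering); a third application, with an arc joining the boundary components parallel to $a$ and $b$, finishes the proof. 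Alternatively, the paper notes that the statement follows immediately from \cite[Theorem~4.3]{AD}. To repair your write-up, replace Case~2 by this bootstrapping with explicit arcs, or else supply an actual argument for the asserted domination.
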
 

\begin{proof} 
The lemma follows immediately from the statement of~\cite[Theorem~4.3]{AD}. Alternatively, one can imitate  
the proof of~\cite[Theorem~1.2]{Le}. Indeed,  applying~\cite[Corollary~3.4]{HKM} to the monodromy 
$\Phi_1=\de_e^{-k-1}$ and a properly embedded arc $\ga_{cd}\subset S$ 
disjoint from the curve $e$ and connecting the components $\del_c$ and $\del_d$ of $\del S$ parallel to 
the curves $c$ and $d$ shows that $\Phi_2=\de_d\de_e^{-k-1}$ is right--veering with respect 
to $\del_d$. Another application of the corollary to $\Phi_2$ and $\ga_{cd}$ shows that 
$\Phi_3=\de_c\de_d\de_e^{-k-1}$ is right--veering with respect to $\del_c$. Moreover, since $\de_c$ is right--veering 
with respect to $\del_c$ and the composition of right--veering diffeomorphisms is still right--veering~\cite{HKM}, 
$\Phi_3$ is right--veering with respect to $\del_d$ as well. Appying the corollary in the same way to $\Phi_3$ 
and an arc connecting the components of $\del S$ parallel to the curves $a$ and $b$ yields the statement 
of the lemma. 
\end{proof} 

\begin{proof}[Proof of Theorem~\ref{t:main}] 
Corollary~\ref{c:under}, Proposition~\ref{p:ot} and Lemma~\ref{l:right-veer} establish the first three portions 
of the statement. We are only left to show that $(S,\Phi_{h,k})$ is not destabilizable for every $h, k\geq 1$. 
If $(S,\Phi_{h,k})$ were destabilizable, it would be a stabilization 
of an open book $(S',\Phi')$, with $S'$ a three--holed sphere and $\Phi' = \tau_1^{a_1}\tau_2^{a_2}\tau_3^{a_3}$, 
where $a_i\in\Z$ and $\tau_i$ is a positive Dehn twist  along a simple closed curve parallel 
to the $i$--th boundary components of $S'$, $i=1,2,3$. 
By~\cite[Theorem~1.2]{Ar}, $\xi_{(S,\Phi_{h,k})}$ is tight if and only if $a_i\geq 0$, $i=1,2,3$. 
Therefore, by Proposition~\ref{p:ot} at least one of these exponents must be strictly negative. 
But the proof of~\cite[Theorem~1.2]{Le} shows that when one of the $a_i$'s is negative,   
any stabilization of $(S',\Phi')$ to an open book with page a four--holed sphere is not right--veering. 
This would contradict Lemma~\ref{l:right-veer}, therefore we conclude that 
$(S,\Phi_{h,k})$ cannot be destabilizable.
\end{proof} 

{\em Note:} after completing the first version of this paper the author was informed of 
independent, unpublished work of A.~Wand containing, in particular, a different proof of 
Proposition~\ref{p:ot}.

\end{document}